\newcommand{\C}{\mathbb C}
\newcommand{\PP}{\mathbb P}
\newcommand{\supess}{\mathop{\rm supess}\nolimits}
\renewcommand{\Re}{\mathop {\rm Re}\nolimits}
 \newtheorem{theorem}{Theorem}
 \newtheorem{corollary}[theorem]{Corollary}
 \newtheorem{lemma}[theorem]{Lemma}
 \newtheorem{remark}[theorem]{Remark}
\begin{document}

\title{Faber polynomials of matrices for non-convex sets}

\dedicatory{Pour Paul Sablonni\`ere, \`a l'occasion de son soixante-cinqui\`eme anniversaire}

\author{B. Beckermann \& M. Crouzeix}

\date{4/10/13}

\begin{abstract}
    It has been recently shown that $|| F_n(A) ||\leq 2$,
  where $A$ is a linear continuous operator acting in a
  Hilbert space,
  and $F_n$ is the Faber polynomial of degree $n$ corresponding
  to some convex compact $E\subset \mathbb C$ containing the
  numerical range of $A$. Such an inequality is useful in numerical linear algebra,
  it allows for instance to derive error bounds
  for Krylov subspace methods.
  In the present paper we extend this result to not necessary
  convex sets $E$.
\end{abstract}

\maketitle

\bigskip

\noindent {\bf Key words:}  GMRES, Krylov subspace methods, numerical range, Faber polynomials, polynomials of a matrix.

\noindent {\bf Classification AMS:}   47A12, 65F10

\section{Introduction and statement of the main results}

Consider a bounded operator $A$ on a Hilbert space $\mathcal H$ with spectrum $\sigma(A)$, for example a square matrix $A\in\mathbb C^{N \times N}$, and denote by $\PP_n$ the space of polynomials of degree $\leq n$.
Following \cite{grtr,tt2}, we are interested in giving upper bounds for the quantity
\begin{align}\label{but}
      \delta_n(A)=\min\{\|p(A)\|\,; p\in \PP_{n}, \ p(0)=1 \} , \quad n =1,2,3,...,
\end{align}
sometimes referred to as the ideal GMRES approximation problem \cite{grtr,tt1}.
For normal $A$, problem \eqref{but} is closely related to the so-called constrained Chebyshev approximation problem
 \begin{align}\label{def2}
      \delta (n,E)=\min\{\|p\|_{L^\infty(E)}\,; p\in \PP_{n}, \ p(0)=1 \},
\end{align}
for a suitable compact $E\subset \mathbb C$ not containing $0$ (since otherwise $\delta(n,E)=1$).
This latter quantity has been discussed for intervals/ellipses $E$ for instance in \cite{fisch1,fisch2,fisch4,freund}, see also the monograph \cite{fisch3}.
Though in general it is difficult to find extremal polynomials for \eqref{def2}, we can find nearly optimal ones.

Given a compact $E \subset \mathbb C$ with a rectifiable Jordan curve boundary, we define as usual the $n$th Faber polynomial
$F_n=F_n^E$ to be the polynomial part of the Laurent expansion at
infinity of $\Phi$, where the Riemann map $\Phi$ maps
conformally the exterior of $E$ onto the exterior of the closed
unit disk $\mathbb D$, and $\Phi(\infty)=\infty$,
$\Phi'(\infty)>0$. Thus,
 \begin{align*}
      F_n(z)=\Phi(z)^n+O(1/z),\quad \hbox{as  }z\to\infty.
\end{align*}

We also introduce the geometric quantity
\begin{align}\label{geometric}
      v(E)=\supess\{\frac1\pi \int_{\partial E_z}|d_\sigma \arg(\sigma-z)|\,; z\in \partial E\}, \quad\hbox{where  }\partial E_z=\partial E\setminus\{z\},
\end{align}
which we assume to be finite. Note that $v(E)\geq 1$, and $v(E)=1$ if $E$ is convex. An estimate due to Radon \cite{rad} tells us that $1{+}v(E)\leq TV(\theta)/2\pi $, where $TV(\theta)$ denotes the total variation of the angle $\theta$ between the positive real axis and the tangent on the boundary $\partial E$.
The following properties have essentially been given established by K\"ovari and Pommerenke in \cite{koepo,pom}.

\begin{lemma}\label{lem_faber}
   Let $E$ be as above, $0\not\in E$, $\gamma=1/|\Phi(0)|$, then $\| F_n \|_{L^\infty(E)}\leq 1+v(E)$, and
  \begin{align}\label{value_at_zero}
      \frac{1}{|F_n(0)|} \leq \frac{\gamma^n}{1-\gamma^{n+1} v(E)}
  \end{align}
  provided that $\gamma^{n+1} v(E)<1$.
\end{lemma}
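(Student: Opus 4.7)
The strategy is to split the proof into an $L^\infty$ bound for $F_n$ on $\partial E$ and a Schwarz-type transfer to the value at $z=0$. For the first part, I would establish, for almost every $z_0=\Psi(e^{i\theta_0})\in\partial E$ and for $n\ge 1$, the integral representation
\begin{equation*}
F_n(z_0)-\Phi(z_0)^n \;=\; \frac{1}{\pi}\int_0^{2\pi} e^{in\theta}\,\alpha'(\theta)\,d\theta,
\end{equation*}
where $\alpha(\theta)=\arg(\zeta(\theta)-z_0)$ and $\zeta(\theta)=\Psi(e^{i\theta})$ parametrizes $\partial E$. Granted this, the triangle inequality together with the definition of $v(E)$ yields $|F_n(z_0)-\Phi(z_0)^n|\le\frac{1}{\pi}\int|\alpha'|d\theta\le v(E)$, so $|F_n(z_0)|\le 1+v(E)$ on a set of full measure in $\partial E$; by continuity of $F_n$ the bound holds on all of $\partial E$, and the maximum modulus principle extends it to all of $E$.

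To derive the representation, the cleanest route is via the auxiliary quotient $R(\theta):=(\zeta(\theta)-z_0)/(e^{i\theta}-e^{i\theta_0})$. By l'H\^opital it extends continuously across $\theta=\theta_0$ with $R(\theta_0)=\Psi'(e^{i\theta_0})\neq 0$, so that $R$ is continuous and nonvanishing on the unit circle. Moreover, as a function of $w$, $R(w)=(\Psi(w)-z_0)/(w-e^{i\theta_0})$ is holomorphic in $|w|\ge 1$ with $R(\infty)=1$ (using $\Psi(w)=w+O(1)$ at infinity) and nonzero there (since $\Psi$ is univalent on the closed exterior and $z_0\notin$ open exterior), so $\log R$ is single-valued with a Laurent expansion $\log R(w)=\sum_{k\ge 1}a_k w^{-k}$. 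The factorization $\log(\zeta(\theta)-z_0)=\log R(\theta)+\log(e^{i\theta}-e^{i\theta_0})$ then reduces the Cauchy-type boundary calculation for $F_n(z_0)$ to the unit-disc model: starting from $F_n(z)=\frac{1}{2\pi i}\oint_{\partial E}\Phi(\zeta)^n (\zeta-z)^{-1}d\zeta$ in the interior of $E$ (the complementary piece $\Phi^n-F_n$ is holomorphic on $\C\setminus E$ with $O(1/\zeta)$ decay, so has vanishing Cauchy integral inside), passing to $\partial E$ by Sokhotski-Plemelj, and using that the unit-disc principal value against $d\log(e^{i\theta}-e^{i\theta_0})$ produces exactly $\frac12 e^{in\theta_0}$ for $n\ge 1$, one arrives at $F_n(z_0)-\Phi(z_0)^n=\frac{1}{2\pi i}\int_0^{2\pi}e^{in\theta}d\log R(\theta)=-na_n$. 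Writing $\alpha=\arg R+\arg(e^{i\theta}-e^{i\theta_0})$ and observing that the second summand has $\frac{d}{d\theta}\arg(e^{i\theta}-e^{i\theta_0})=\frac12$ contributes $0$ to $\int e^{in\theta}\alpha'd\theta$ for $n\ge 1$, one recovers the same value $-na_n$ from $\frac{1}{\pi}\int e^{in\theta}\alpha'd\theta$, matching the two expressions.

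For the value at $0$, set $G_n(z):=\Phi(z)^n-F_n(z)$: this is holomorphic on $\C\setminus E$ and vanishes at infinity, since $F_n$ is by construction the polynomial part of the Laurent expansion of $\Phi^n$ at $\infty$. The first part gives $|G_n|\le v(E)$ on $\partial E$. Transplanting, $\tilde G_n(w):=G_n(\Psi(w))$ is holomorphic in $|w|>1$, continuous on $|w|\ge 1$, vanishes at infinity, and is bounded by $v(E)$ on $|w|=1$; a standard Schwarz-type argument applied to $w\,\tilde G_n(w)$ (holomorphic outside the unit disc and bounded at infinity) yields $|\tilde G_n(w)|\le v(E)/|w|$ for $|w|\ge 1$. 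Evaluating at $w=\Phi(0)$, with $|\Phi(0)|=1/\gamma$, gives $|G_n(0)|\le v(E)\gamma$, and consequently
\begin{equation*}
|F_n(0)|\;\ge\;|\Phi(0)|^n-|G_n(0)|\;\ge\;\gamma^{-n}-v(E)\gamma\;=\;\frac{1-v(E)\gamma^{n+1}}{\gamma^n},
\end{equation*}
which inverts to \eqref{value_at_zero} under the stated hypothesis $\gamma^{n+1}v(E)<1$.

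The main obstacle is the derivation of the integral representation in the first part: extracting the precise constant $\frac{1}{\pi}$ requires the cancellation between the Plemelj boundary term $\frac12\Phi(z_0)^n$ and the singular principal value against $d\log|\zeta-z_0|$, which is cleanest to track by factoring out the unit-disc model through the auxiliary function $R$ (whose zero winding on the circle is essential for keeping the branch-of-logarithm bookkeeping consistent at the point $z_0$).
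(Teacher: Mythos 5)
Your proof is correct and follows essentially the same route as the paper: the boundary representation $F_n(z_0)-\Phi(z_0)^n=\frac{1}{\pi}\int_{\partial E_{z_0}}\Phi(\sigma)^n\,d_\sigma\arg(\sigma-z_0)$ bounded by $v(E)$ via the definition \eqref{geometric}, followed by the maximum principle for $(F_n-\Phi^n)\Phi$ in the exterior domain (your $w\,\tilde G_n(w)$ is exactly this function transplanted by $\Psi$). The only difference is in the bookkeeping for the representation formula: the paper obtains it by specializing the operator-valued identity of Lemma~\ref{lem_rep_faber} to scalar $z$ and letting $z\to\partial E$, whereas you rederive it directly through a Sokhotski--Plemelj principal-value computation with the auxiliary nonvanishing factor $R$; both arguments rest on the same half-residue cancellation producing the term $\Phi(z_0)^n$.
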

For the sake of completeness, we will give in \S 2 a proof of this statement. From Lemma~\ref{lem_faber} and the maximum principle for $p/\Phi^n$, $p \in \PP_n$, we conclude that, provided that $\gamma^{n+1} v(E)<1$
\begin{align*}
      \gamma ^n\leq \delta (n,E)\leq \frac{\| F_n \|_{L^\infty(E)}}{|F_n(0)|} \leq \gamma^n \frac{1+v(E)}{1-\gamma ^{n+1}v(E)}.
\end{align*}
One attempt to relate this inequality to the matrix-valued extremal problem \eqref{but} could be to make use of the notion of $K$-spectral sets, see for instance \cite{babe} and the references therein: we look for $E\subset \mathbb C$ containing $\sigma(A)$ and a constant $K>0$ such that $\| p(A) \|\leq K \ \| p \|_{L^\infty(E)}$ for all $p\in \mathbb P_n$, and thus $\delta_n(A)\leq K \, \delta(n,E)$. For instance, if
$X^{-1} A X$ is normal, then $K=\| X \| \, \| X^{-1} \|$. Natural candidates for $E$ are obtained by the pseudo-spectrum,  or the numerical range being defined by
\begin{align*}
    W(A)=\{\langle Au,u\rangle\,; u\in \mathcal H, \|u\|=1\},
\end{align*}
see for instance the discussion in \cite{greenbaum} and \cite{crzx,crou4}.
In the present paper we will use instead the inequality
\begin{align}\label{est6bis}
      \delta_n (A)\leq \frac{\| F_n(A) \|}{|F_n(0)|} \leq \gamma^n \frac{\| F_n(A) \|}{1-\gamma ^{n+1}v(E)}
\end{align}
being a consequence of \eqref{value_at_zero}, where it remains the simpler task of
bounding $\| F_n(A) \|$ for a suitable $E\subset \mathbb C$. Previous work on this subject includes \cite{tt2} where the bound of Kreiss type
depends on $n$ or on the dimension of $\mathcal H$, see also \cite{hb} for related work in terms of the pseudo-spectrum. It has been shown in \cite{beck} and was previously known for ellipses \cite{eiermann} that $\| F_n(A) \|\leq 2=1+v(E)$ provided that $E$ is convex and contains $W(A)$.

Bounding $\| F_n(A) \|$ for a suitable $E\subset \mathbb C$ is also of interest for various other tasks, for instance for spectral inclusions \cite{AES}, Faber hypercyclicity \cite{BaGr}, or the approximate computation of matrix functions \cite{BeRe}. In view of \eqref{est6bis}, we would like $E$ containing $\sigma(A)$ to be well separated from $0$ and to be as small as possible, and thus also allow for non-convex sets.
In the present paper we show the following result.

\begin{theorem}\label{thm}
   Let $0\not\in \sigma(A)$, and consider $E=\{ z \in E_1 ; |z|\geq r \}$ for some $r>0$, with $E_1$ containing $W(A)$ being convex, and $E$ supposed to be simply connected.
   \\ {\bf (a)} If $1/r\geq \| A^{-1} \|$ then $\| F_n(A) \| \leq 1+v(E)$,
   \\ {\bf (b)} If $1/r\geq \max \{ |z|: z \in W(A^{-1}) \}$ then $\| F_n(A) \| \leq 2 v(E)$.
\end{theorem}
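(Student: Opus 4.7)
My plan is to represent $F_n(A)$ by a Cauchy--Riesz contour integral over $\partial E$ and exploit the natural decomposition $\partial E=\Gamma_{\mathrm{out}}\cup\Gamma_{\mathrm{in}}$, where $\Gamma_{\mathrm{out}}\subset\partial E_1$ is the outer (convex) arc and $\Gamma_{\mathrm{in}}\subset\{|\zeta|=r\}$ is the inner circular arc. I would first verify that $\sigma(A)\subset E$ under either hypothesis: $W(A)\subset E_1$ gives $\sigma(A)\subset E_1$, while $\sigma(A^{-1})\subset\{|w|\le 1/r\}$ --- valid in both (a), where it is implied by $\|A^{-1}\|\le 1/r$, and (b), where it follows from $\sigma(A^{-1})\subset\overline{W(A^{-1})}\subset\{|w|\le 1/r\}$ --- yields $\sigma(A)\subset\{|z|\ge r\}$. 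The holomorphic functional calculus then gives
\begin{equation*}
F_n(A)=\frac{1}{2\pi i}\int_{\partial E}F_n(\zeta)\,(\zeta I-A)^{-1}\,d\zeta=I_{\mathrm{out}}+I_{\mathrm{in}}.
\end{equation*}

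For $I_{\mathrm{out}}$ I would adapt the technique of \cite{beck}: since $W(A)\subset E_1$ and $\Gamma_{\mathrm{out}}\subset\partial E_1$ lies on a convex boundary, the resolvent admits a Herglotz-type representation that leads to the sharp angular bound
\[
\|I_{\mathrm{out}}\|\le \|F_n\|_{L^\infty(E)}\cdot\frac{1}{\pi}\sup_{z\in\partial E}\int_{\Gamma_{\mathrm{out}}}|d_\zeta\arg(\zeta-z)|,
\]
and the K\"ovari--Pommerenke bound $\|F_n\|_{L^\infty(E)}\le 1+v(E)$ of Lemma~\ref{lem_faber} supplies the prefactor. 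The treatment of $I_{\mathrm{in}}$ is where the two parts of the theorem diverge. In~(a), $\|A^{-1}\|\le 1/r$ allows me to push the inner arc slightly inward to $|\zeta|=r'<r$, where $(\zeta I-A)^{-1}=-A^{-1}(I-\zeta A^{-1})^{-1}$ is controlled by a convergent Neumann series with operator norm $\le \|A^{-1}\|/(1-|\zeta|\,\|A^{-1}\|)$; passing to the limit $r'\nearrow r$ (legitimate because $\sigma(A)\cap\{|\zeta|<r\}=\emptyset$) yields a sharp estimate with no multiplicative loss, and assembling with $I_{\mathrm{out}}$ produces the bound $1+v(E)$. In~(b), only $w(A^{-1})\le 1/r$ is at hand, so the Neumann trick is not sharp enough; instead one applies Beckermann's convex-case estimate to $A^{-1}$ relative to the convex disk $\{|w|\le 1/r\}$ that contains $W(A^{-1})$, which costs an unavoidable multiplicative factor $2$. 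Combined with the factor $2$ coming from $I_{\mathrm{out}}$ (also via Beckermann), the overall bound becomes $2\,v(E)$.

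The main obstacle I expect is the angular bookkeeping: the two angular-variation integrals from $\Gamma_{\mathrm{out}}$ and $\Gamma_{\mathrm{in}}$ must sum to precisely $v(E)$ as in~\eqref{geometric}, rather than to some inflated upper bound, which requires choosing the Herglotz reference points so that they agree with the same $z\in\partial E$ for both arcs simultaneously. A secondary technicality is that $\sigma(A)$ may touch $\Gamma_{\mathrm{in}}$ when equality holds in the hypothesis, so the limit $r'\nearrow r$ has to be handled with a continuity argument; and the simple-connectedness of $E$ is used throughout to guarantee that $\Phi$ is a genuine conformal Riemann map onto the exterior of the unit disk, so that $F_n$ really is the polynomial part of $\Phi^n$.
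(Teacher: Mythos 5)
Your overall strategy --- splitting the plain Cauchy integral $\frac{1}{2\pi i}\int_{\partial E}F_n(\zeta)(\zeta I-A)^{-1}\,d\zeta$ into an outer and an inner piece and bounding each in operator norm --- is not the paper's route, and it contains gaps that I do not believe can be repaired along the lines you sketch. The most serious is the inner arc in case (a): the hypothesis $\|A^{-1}\|\le 1/r$ allows $\sigma(A)$ to touch the circle $|z|=r$, so $\|(\zeta I-A)^{-1}\|$ is simply not bounded on $\Gamma_{\mathrm{in}}$; your Neumann-series bound $\|A^{-1}\|/(1-|\zeta|\,\|A^{-1}\|)$ diverges as $r'\nearrow r$, and no continuity argument recovers a finite, let alone sharp, constant. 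A second gap is the estimate for $I_{\mathrm{out}}$: the right-hand side you write, $\|F_n\|_{L^\infty(E)}$ times a scalar angular variation, is a bound for the \emph{scalar} double-layer potential (this is essentially how Lemma~\ref{lem_faber} is proved), not for the operator-valued integral $\int_{\Gamma_{\mathrm{out}}}F_n(\zeta)(\zeta I-A)^{-1}\,d\zeta$, whose integrand again contains an uncontrolled resolvent. Finally, in case (b), ``applying Beckermann's convex estimate to $A^{-1}$'' bounds $\|G_m(A^{-1})\|$ for Faber polynomials of a disk containing $W(A^{-1})$; it does not bound a piece of a contour integral representing $F_n(A)$, and you give no mechanism to convert one into the other. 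The factor bookkeeping ($2\times 2$ yielding $2v(E)$) is therefore not backed by actual estimates.

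The idea you are missing is the symmetrization that makes the resolvent harmless. The paper first replaces $F_n(\sigma)$ by $\Phi(\sigma)^n$ in the Cauchy integral (their difference times the resolvent is analytic outside $E$ and $O(\sigma^{-2})$ at infinity), then adds the vanishing anti-analytic integral $\int\Phi^n(\bar\sigma-A^*)^{-1}\,d\bar\sigma$ to obtain $F_n(A)=\int_0^L\Phi(\sigma)^n\mu(s,A)\,ds$ with the \emph{self-adjoint} kernel $\mu(s,A)=\frac{1}{2\pi}\bigl(\nu(\sigma-A)^{-1}+\bar\nu(\bar\sigma-A^*)^{-1}\bigr)$ (Lemma~\ref{lem_rep_faber}). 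Two facts then do all the work: $\int_0^L\mu(s,A)\,ds=2I$ and $|\Phi|=1$ on $\partial E$, giving $\|F_n(A)\|\le 2+2\int_0^L\alpha_-(s)\,ds$ where $\alpha_-$ is the negative part of the bottom of the spectrum of $\mu(s,A)$ (Lemma~\ref{lem}). On the convex arc, $W(A)\subset E_1$ forces $\mu(s,A)\ge 0$; on the circular arc, with $B=re^{i\theta}A^{-1}$, hypothesis (a) gives $2\pi r\mu(s,A)+1=\Re\bigl((I+B)(I-B)^{-1}\bigr)\ge 0$ and hypothesis (b) gives $2\pi r\mu(s,A)+2=(I-B)^{-1}(2I-B-B^*)(I-B^*)^{-1}\ge 0$ --- operator inequalities that remain valid even where the resolvent norm blows up. Integrating $\alpha_-\le 1/(2\pi r)$, resp.\ $1/(\pi r)$, over the arc of opening $\omega$ gives the bounds $2+\omega/\pi$ and $2+2\omega/\pi$, and a separate scalar computation identifies $v(E)=1+\omega/\pi$. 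None of these positivity mechanisms appear in your proposal, and they are precisely what makes the theorem true with dimension-free constants.
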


The remainder of the paper is organized as follows. In \S 2 we introduce in Lemma~\ref{lem} our new technique of estimating $F_n(A)$ for sets $E$ which are not necessarily convex. This formula is based on an integral formula for Faber polynomials stated in Lemma~\ref{lem_rep_faber}, and used already in \cite{beck}. As a 
by-product, we give a proof of Lemma~\ref{lem_faber}. We then provide a proof of Theorem~\ref{thm}, and discuss in Remark~\ref{rem} possible variations and generalizations of Theorem~\ref{thm}.
\S 3 is devoted to a generalization of the well-known Elman bound \cite{eisen,el} and its recent generalizations in \cite[Theorem~1]{bgt} and \cite[Corollaire~3]{beck} for the convergence of Krylov subspace methods, where $E$ is lens-shaped, allowing to make all constants explicit.

\section{Proof of the main results}
In what follows we will always suppose that the boundary of $E$ is a rectifiable Jordan curve. Also, in order to avoid technical difficulties, in what follows we will assume that $\sigma(A)$ is a subset of the interior of $E$ (the general case following by limit considerations).

We start from a representation of Faber polynomials given already in \cite{beck}, which is a special case of an integral formula given in \cite{crzx} for general polynomials.

\begin{lemma}\label{lem_rep_faber}
  Let $\sigma=\sigma(s)$ be a parametrization of $\partial E$ through arc length, $L$ the length of $\partial E$, and denote by $\nu=\frac{1}{i} \frac{\partial \sigma}{\partial n}$ the unit outer normal of $\partial E$ at $\sigma(s)$ (which by assumption on $E$ exists for almost all $s$). Then for $n\geq 1$
\begin{align}\label{rep}
     F_n(A)&=\int_0^L\Phi(\sigma(s) )^n\mu (s,A)\, ds,
     \quad \mu (s,A):=\frac{1}{2\pi }\big(\nu(\sigma {-}A)^{-1}+\bar\nu(\bar\sigma {-}A^*)^{-1}\big).
 \end{align}
\end{lemma}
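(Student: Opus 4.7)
The plan is to split the right-hand side of (\ref{rep}) according to the two summands in $\mu(s,A)$, writing
\[ I_1=\frac{1}{2\pi}\int_0^L \Phi(\sigma)^n \nu(\sigma-A)^{-1}\,ds, \quad I_2=\frac{1}{2\pi}\int_0^L \Phi(\sigma)^n \bar\nu(\bar\sigma-A^*)^{-1}\,ds, \]
and to show separately that $I_1=F_n(A)$ and $I_2=0$.

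For $I_1$, the positive orientation of $\partial E$ yields $d\sigma=i\nu\,ds$, hence $\nu\,ds=-i\,d\sigma$ and
\[ I_1=\frac{1}{2\pi i}\oint_{\partial E}\Phi(\sigma)^n (\sigma-A)^{-1}\,d\sigma. \]
The defining property of the Faber polynomial is that $g_n(\sigma):=\Phi(\sigma)^n-F_n(\sigma)$ extends holomorphically from $\partial E$ to the exterior of $E$ and is $O(1/\sigma)$ as $\sigma\to\infty$. Since $\sigma(A)$ lies in the interior of $E$, the Dunford--Riesz calculus gives $\tfrac{1}{2\pi i}\oint_{\partial E}F_n(\sigma)(\sigma-A)^{-1}\,d\sigma=F_n(A)$, while deforming the contour outward to a circle of radius $R$, on which the integrand is $O(R^{-2})$, yields $\tfrac{1}{2\pi i}\oint_{\partial E}g_n(\sigma)(\sigma-A)^{-1}\,d\sigma=0$. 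Hence $I_1=F_n(A)$.

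For $I_2$, I take adjoints. Since $\bar\nu$ and $\bar\sigma$ are scalars and $((\bar\sigma-A^*)^{-1})^*=(\sigma-A)^{-1}$,
\[ I_2^{*}=\frac{1}{2\pi}\int_0^L \overline{\Phi(\sigma)^n}\,\nu(\sigma-A)^{-1}\,ds = \frac{1}{2\pi i}\oint_{\partial E}\overline{\Phi(\sigma)^n}\,(\sigma-A)^{-1}\,d\sigma. \]
The key identity is that on $\partial E$ one has $|\Phi(\sigma)|=1$, so $\overline{\Phi(\sigma)^n}=\Phi(\sigma)^{-n}$ pointwise on the boundary, and substituting gives
\[ I_2^{*}=\frac{1}{2\pi i}\oint_{\partial E}\Phi(\sigma)^{-n}(\sigma-A)^{-1}\,d\sigma. \]
Because $\Phi$ maps the exterior of $E$ into the exterior of $\mathbb D$ and is therefore zero-free there, the integrand extends holomorphically to the whole exterior of $E$; since $\Phi(\sigma)/\sigma\to\Phi'(\infty)>0$, it decays as $O(|\sigma|^{-n-1})$, and for $n\geq 1$ pushing the contour to infinity gives $I_2^{*}=0$, hence $I_2=0$, completing the proof.

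The main hurdle is the treatment of the ``antiholomorphic'' term $I_2$: at first glance the factor $(\bar\sigma-A^*)^{-1}$ offers no leverage, and the argument only unlocks once one observes the boundary symmetry $\overline{\Phi}=1/\Phi$ on $\partial E$, which converts $I_2^{*}$ into a genuine holomorphic contour integral amenable to a decay-at-infinity argument. Once this reduction is made, both pieces reduce to standard applications of the Dunford calculus combined with the defining property of $F_n$.
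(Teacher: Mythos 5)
Your proof is correct and follows essentially the same route as the paper: the identity $I_1=F_n(A)$ is the paper's observation that $(F_n-\Phi^n)(\sigma-A)^{-1}=O(\sigma^{-2})$ integrates to zero combined with the Cauchy--Dunford formula, and your adjoint argument for $I_2=0$ using $\overline{\Phi^n}=\Phi^{-n}$ on $\partial E$ and the $O(\sigma^{-n-1})$ decay is exactly the paper's second step. The only difference is presentational (you convert $\nu\,ds$ to $d\sigma$ at the start rather than at the end).
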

\begin{proof}
  Since $(F_n(\sigma )-\Phi(\sigma)^n)(\sigma {-}A)^{-1}$ is analytic outside of $E$ with a behaviour $O(\sigma ^{-2})$ at $\infty$, we have that
  \begin{align*}
       \int_{\partial E}(F_n(\sigma ) {-}\Phi(\sigma)^n)(\sigma {-}A)^{-1}\,d\sigma=0.
  \end{align*}
  Next, we note that
  \begin{align*}
     \int_{\partial E}\Phi(\sigma)^n(\bar\sigma {-}A^*)^{-1}\,d\bar\sigma= \Big(\int_{\partial E}\Phi(\sigma)^{-n}(\sigma {-}A)^{-1}\,d\sigma \Big)^* =0,
  \end{align*}
  since $|\Phi |=1$ on $\partial E$ and $\Phi(\sigma)^{-n}(\sigma {-}A)^{-1}$ is analytic outside of $E$ with a behaviour
  $O(\sigma ^{-(n+1)})$ at $\infty$. Thus, from the Cauchy formula,
  \begin{align*}
        F_n(A)&=\frac{1}{2\pi i}\int_{\partial E}F_n(\sigma )(\sigma {-}A)^{-1}\,d\sigma=
        \frac{1}{2\pi i}\int_{\partial E}\Phi(\sigma)^n(\sigma {-}A)^{-1}\,d\sigma\\
        &=\frac{1}{2\pi i}\int_{\partial E}\Phi(\sigma)^n\Big((\sigma {-}A)^{-1}\,d\sigma -(\bar\sigma {-}A^*)^{-1}\,d\bar\sigma\Big)\\ &=\int_0^L\Phi(\sigma )^n\mu (s,A)\, ds.
   \end{align*}
\end{proof}

\begin{proof}[Proof of Lemma~\ref{lem_faber}]
    The statement of Lemma~\ref{lem_rep_faber} remains valid in the scalar case $A=z$ with $z$ in the interior of $E$. Letting $z$ tend to the boundary $\partial E$ shows the following formula
    implicitly given by K\"ovari and Pommerenke in \cite{koepo, pom}:
    we have for $z\in \partial E$,
    \begin{align*}
        F_n(z)
    &=\Phi (z)^n+\frac{1}{\pi }\int_{\partial E_z}\Phi (\sigma )^n\,d_\sigma \!\arg(\sigma {-}z),
    \end{align*}
    provided that a tangent exists in $z$ (which by assumption on $E$ is true almost everywhere on $\partial E$). Comparing with \eqref{geometric}, it follows that
    $\| F_n - \Phi^n \|_{L^\infty(\partial E)}\leq v(E)$, and thus
    $\| F_n \|_{L^\infty(E)}=\| F_n \|_{L^\infty(\partial E)}\leq 1 + v(E)$, as claimed in Lemma~\ref{lem_faber}. Finally, formula \eqref{value_at_zero} follows from the estimate
 \begin{align*}
        |(F_n(0){-}\Phi^n(0))\Phi (0)|\leq \| F_n - \Phi^n \|_{L^\infty(\partial E)}\leq v(E),
 \end{align*}
obtained by applying the maximum principle to the function $(F_n {-} \Phi^n)\Phi$ being holomorphic outside of $E$ including at $\infty$.
\end{proof}

We are now prepared of stating our main tool for estimating $\| F_n(A)\|$.

\begin{lemma}\label{lem}
    Denote by $\alpha(s)$ the minimum of the (real and compact) spectrum of the self-adjoint operator $\mu (s,A)$ introduced in \eqref{rep}, and by $\alpha_-(s)=\max\{ 0,-\alpha(s)\}$ its negative part. Then for $n \geq 1$,
    we have
    \begin{align*}
        \|F_n(A)\|\leq 2\Big(1+\int_0^L\alpha_-(s)\,ds\Big).
    \end{align*}
\end{lemma}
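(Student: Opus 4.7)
The starting point is the integral representation $F_n(A) = \int_0^L \Phi(\sigma(s))^n \, \mu(s,A)\,ds$ from Lemma~\ref{lem_rep_faber}, where $|\Phi(\sigma(s))|=1$ on $\partial E$ and $\mu(s,A)$ is self-adjoint. The plan is to decompose $\mu = \mu_+ - \mu_-$ into its positive and negative parts, so that $\mu_\pm(s,A)\geq 0$ and $\|\mu_-(s,A)\| = \alpha_-(s)$, and then to set $M_\pm = \int_0^L \mu_\pm(s,A)\,ds$. A crude estimate $\|F_n(A)\|\leq \int_0^L \|\mu(s,A)\|\,ds$, or a bound relying on the numerical-radius inequality $\|T\|\leq 2\,w(T)$, would introduce a spurious extra factor of two, so a more careful bilinear argument is needed.

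The first ingredient is the identity $\int_0^L \mu(s,A)\,ds = 2I$, equivalently $M_+ - M_- = 2I$. With $\nu\,ds = -i\,d\sigma$ and $\bar\nu\,ds = i\,d\bar\sigma$, this reduces to two Riesz-Dunford computations: $\tfrac{1}{2\pi i}\int_{\partial E}(\sigma-A)^{-1}d\sigma = I$ by Cauchy's formula (since $\sigma(A)$ lies in the interior of $E$), and $\tfrac{1}{2\pi i}\int_{\partial E}(\bar\sigma-A^*)^{-1}d\bar\sigma = -I$ by taking the Hilbert-space adjoint of the first. Consequently $M_+ + M_- = 2I + 2M_-$, and since $\mu_-\geq 0$ one has $\|M_-\| \leq \int_0^L \|\mu_-(s,A)\|\,ds = \int_0^L \alpha_-(s)\,ds$.

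The central inequality is $\|F_n(A)\| \leq \|M_+ + M_-\|$, proved by a triple application of Cauchy-Schwarz. For arbitrary unit vectors $u,v\in\mathcal H$, one starts from $|\langle F_n(A)u,v\rangle| \leq \int_0^L |\langle \mu(s,A)u,v\rangle|\,ds$ since $|\Phi^n|\equiv 1$; applies Cauchy-Schwarz to each of the positive semi-definite forms $\mu_\pm(s,A)$, giving $|\langle \mu_\pm u,v\rangle| \leq \langle \mu_\pm u,u\rangle^{1/2}\langle \mu_\pm v,v\rangle^{1/2}$; integrates in $s$ and applies Cauchy-Schwarz in $L^2(ds)$ to each of the two resulting integrals to get $\langle M_\pm u,u\rangle^{1/2}\langle M_\pm v,v\rangle^{1/2}$; and finally merges the two branches via the elementary $\R^2$-inequality $\sqrt{ab}+\sqrt{cd} \leq \sqrt{(a+c)(b+d)}$. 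This yields $|\langle F_n(A)u,v\rangle| \leq \langle (M_+ + M_-)u,u\rangle^{1/2}\langle (M_+ + M_-)v,v\rangle^{1/2} \leq \|M_+ + M_-\|$, and combining with $M_+ + M_- = 2I + 2M_-$ and $\|M_-\|\leq \int_0^L \alpha_-(s)\,ds$ gives the asserted bound.

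The delicate point is the Cauchy-Schwarz cascade: carrying $\mu_+$ and $\mu_-$ separately through each step and merging them only at the very end via the $\R^2$-inequality is precisely what produces $\|M_+ + M_-\|$ rather than the strictly larger $\|M_+\| + \|M_-\|$, thereby saving the factor of two that would otherwise be lost by a direct triangle inequality or a numerical-radius bound.
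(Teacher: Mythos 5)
Your proof is correct, and it reaches the paper's bound by a slightly different decomposition. The paper regularizes the integrand by a scalar shift: it writes $\mu(s,A)+\alpha_-(s)I\geq 0$, invokes (without proof) the inequality $\|\int_0^L\Phi^n(\mu+\alpha_-)\,ds\|\leq\|\int_0^L(\mu+\alpha_-)\,ds\|=2+\int_0^L\alpha_-\,ds$ for positive semi-definite integrands with unimodular weights, and then pays a second $\int_0^L\alpha_-\,ds$ through the triangle inequality to remove the added scalar term. You instead split $\mu=\mu_+-\mu_-$ into its operator positive and negative parts, run the Cauchy--Schwarz cascade on each branch, and merge via $\sqrt{ab}+\sqrt{cd}\leq\sqrt{(a+c)(b+d)}$ to obtain $\|F_n(A)\|\leq\|M_++M_-\|=\|2I+2M_-\|$. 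Two remarks on what each route buys. First, your explicit bilinear argument is exactly the (unstated) justification of the paper's key step $\|\int\Phi^nP\,ds\|\leq\|\int P\,ds\|$ for $P\geq 0$, so your write-up is in that respect more self-contained. Second, your intermediate bound $\|2I+2M_-\|$ is formally at least as sharp as the paper's $2+2\int_0^L\alpha_-(s)\,ds$, since the scalar shift replaces the operator $\mu_-(s,A)$ by $\alpha_-(s)I=\|\mu_-(s,A)\|\,I$ at every $s$ before integrating; for the lemma as stated the two coincide after the final estimate $\|M_-\|\leq\int_0^L\alpha_-(s)\,ds$. The only point you pass over silently is the measurability of $s\mapsto\mu_\pm(s,A)$ needed to define $M_\pm$, but this is harmless because $T\mapsto T_\pm$ is norm-continuous on self-adjoint operators; the paper's use of $\alpha_-(s)$ requires the same kind of remark.
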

\begin{proof}
    We first observe that
    \begin{align*}
         \int_0^L \mu(s,A)\, ds = 2 \,\Re \bigl( \frac{1}{2\pi i} \int_{\partial E}  (\sigma - A)^{-1} \, d\sigma \bigr) = 2\, I ,
    \end{align*}
    twice the identity. Taking into account that $\mu(s,A)+\alpha_-(s)$ is positive semi-definite for $s\in [0,L]$ and $|\Phi|=1$ on $\partial E$, we conclude that \begin{align*}
        \Big\|\int_0^L\Phi(\sigma )^n(\mu (s,A){+}\alpha_-(s))\, ds\Big\|&\leq \Big\|\int_0^L(\mu (s,A){+}\alpha_-(s))\, ds\Big\|\\
        &\leq 2+\int_0^L\alpha_-(s)\, ds.
    \end{align*}
    Thus, Lemma~\ref{lem} follows from Lemma~\ref{lem_rep_faber} and the triangular inequality
    \begin{align*}
        \|F_n(A)\|\leq
            \Big\|\int_0^L\Phi(\sigma )^n(\mu (s,A){+}\alpha_-(s))\, ds\Big\|+
            \Big\|\int_0^L\Phi(\sigma )^n\alpha_-(s)\, ds\Big\|.
    \end{align*}
\end{proof}

\begin{figure}[tb]
 \includegraphics[width=17cm,bb=0 560 590 740,clip=true]{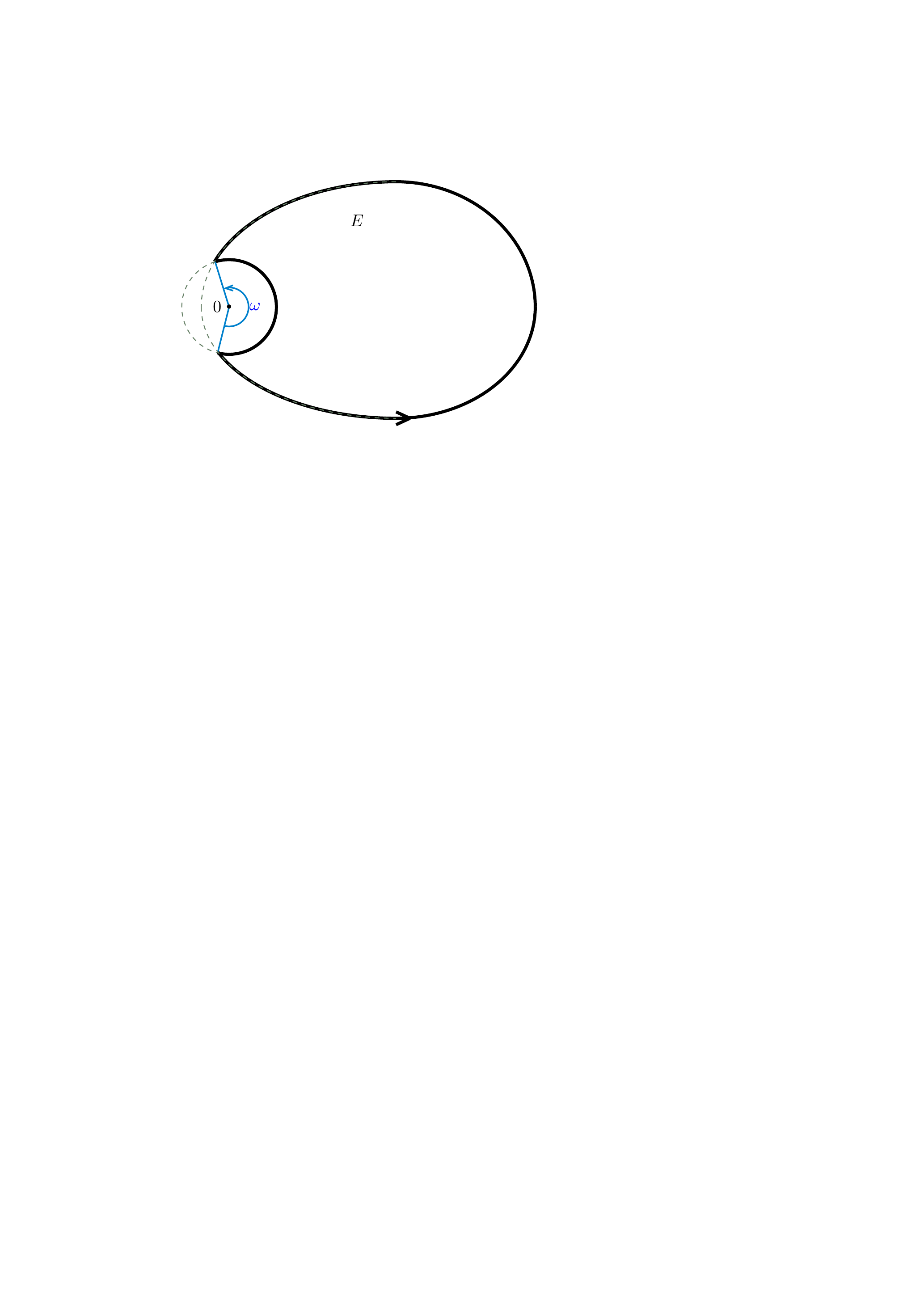}
\caption{Shape of the set $E$ of Theorem~\ref{thm}.}\label{figure1}
\end{figure}

We observe that $\alpha_-(s)=0$ iff $\mu(s,A)$ is positive semi-definite, or, in other words, the numerical range $W(A)$ is a subset of the half plane $\Pi_s:=\{z\in\C\,; \Re\overline{\nu(s)}(\sigma(s) {-}z)\geq 0\}$, with boundary being tangent to $\partial E$ in $\sigma(s)$. Thus for convex $E$ containing $W(A)$ we deduce from Lemma~\ref{lem} the bound $\| F_n(A) \| \leq 2$ mentioned before.

\begin{proof}[Proof of Theorem~\ref{thm}.] According to Fig.~\ref{figure1}, we have to distinguish two cases: if $\sigma(s) \subset \partial E \cap E_1$, the convex part of $\partial E$, then by assumption $W(A) \subset E_1 \subset \Pi_s$ and thus $\alpha_-(s)=0$.

It remains to analyze the circular part of the boundary which can be parametrized by $\sigma(s)=re^{i\theta}$, with $\theta$ decreasing from $\theta_1$ to $\theta_1-\omega$, $\omega$ being the opening angle as introduced in Fig.~\ref{figure1}. Then $ds=-r\, d\theta$ and $\nu(s)=-e^{i\theta}$. Consider the operator $B:=re^{i\theta} A^{-1}$.

We first consider the case {\bf (a)} in which $\| B \| \leq 1$, implying that
\begin{align*}
   2\pi r\mu (s,A)+1&=1-re^{i\theta }(re^{i\theta}{-}A)^{-1}-re^{-i\theta }(re^{-i\theta}{-}A^*)^{-1}\\
   &=1+B(I{-}B)^{-1}+B^*(I{-}B^*)^{-1} = \Re \bigl( (I+B)(I-B)^{-1} ) \geq 0.
\end{align*}
Hence, on this part of the boundary, $\alpha_-(s)\leq 1/(2\pi r)$.
It follows from Lemma~\ref{lem}, that $\|F_n(A)\|\leq 2 -\int_{\theta_1}^{\theta_1{-}\omega}d\theta /\pi =2 {+}\omega/\pi$.

In case {\bf (b)} we have the weaker assumption $W(B)\subset \mathbb D$ and thus $2 - B - B^*\geq 0$, implying that
\begin{align*}
   2\pi r\mu (s,A)+2&=2-re^{i\theta}(re^{i\theta}{-}A)^{-1}
   -
   re^{-i\theta}(re^{-i\theta}{-}A^*)^{-1}\\
   &=2+B(I{-}B)^{-1}+B^*(I{-}B^*)^{-1}\\
   &=(I{-}B)^{-1}(2I{-}B{-}B^*)(I{-}B^*)^{-1}\geq 0.
\end{align*}
Thus as before we conclude from Lemma~\ref{lem} that $\|F_n(A)\|\leq 2 {+}2 \omega/\pi$.\medskip

It remains to show that $v(E)=1{+}\omega/\pi$. For that, we note that
$\frac1\pi d_\sigma (\arg(\sigma(s) {-}z))=\mu(s,z)\,ds$, whence
\begin{align*}
v(E)=1+2\,\supess\{ \int_{\partial E_z}\mu_-( s,z)\,ds\,; z\in \partial E\}.
\end{align*}
For $z\in\partial E$, it holds $\mu_-( s,z)=0$ if $\sigma (s)\in \partial E\cap E_1$,
$\mu_-( s,z)\leq 1/2\pi r$ if $\sigma (s)$ belongs to the circular part
and $\mu_-( s,z)= 1/2\pi r$ if $\sigma (s)$ and $z$ belong to the circular part.
This shows that $\int_{\partial E_z}\mu_-( s,z)\,ds \leq 2\,\omega/\pi $, with equality if $z$ belongs to the circular part.
\end{proof}

\begin{figure}[tb]
 \includegraphics[width=17cm,viewport=0 560 590 740,clip=true]{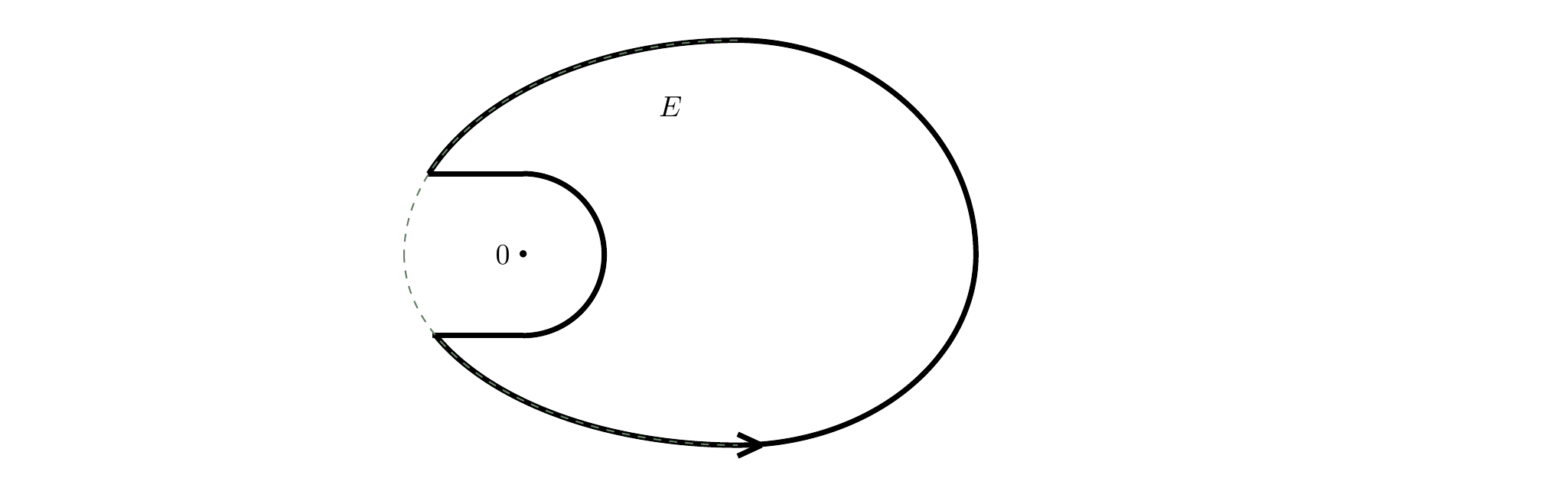}\caption{Shape of the set $E$ of Remark~\ref{rem}(b).}\label{figure2}
\end{figure}
Let us discuss variations and generalizations of Theorem~\ref{thm}.

\begin{remark}\label{rem}{\rm
  {\bf (a)} Theorem~\ref{thm}(b) remains valid if we replace$|z|\geq r$ by $|z{-}c|\geq r$ in the definition of $E$,
  and if we assume $1/r\geq \sup\{|z|\,; z\in W((A{-c)}^{-1})\}$.
  The reader will not have difficulty to generalize this to simply connected domains of the form $E=\{z\in E_1\,; |z-c_1|\geq r_1,\dots,  |z-c_k|\geq r_k\}$. \\
  {\bf (b)} Other variations are possible. For instance, if we consider $E_1$ a compact convex set such that $W(A)\subset E_1$, then $E=\{z\in E_1\,; |z{+}x|\geq r$  for all $x\geq 0\}$ is simply connected, see Fig.~\ref{figure2}. If we suppose in addition that
  $1/r\geq \sup\{|z|\,; z\in W((A{+}x)^{-1})\}$ for all $x\geq 0$, then we have $\alpha_-(s)=0$ on $\partial E\cap E_1$ and $\alpha_-(s)\leq 1/(\pi r)$ on the remaining part of the boundary. It seems however that in general there is no obvious link between the resulting bound for $\| F_n(A)\|$ and $v(E)$.
}\end{remark}

\section{An application to Krylov subspace methods}

An interesting application of the estimation of the quantity $\delta_n(A)$ defined in \eqref{but} concerns the study of convergence of Krylov subspace methods such as FOM, GMRES, BiCG, QMR,\dots, see for instance \cite{greenbaum} and the references therein. These methods are very popular for solving linear systems $Ax=b$ of large dimension $N$. Here $A$ is a $N\times N$ matrix with complex entries and $b\in \C^N$ a given vector. At the step $n$, one constructs an approximation $x_n$ of the solution $x$ which belongs to the Krylov subspace $K_n=$ Span$\{b,Ab,\dots,A^{n-1}b\}$. The above-mentioned Krylov subspace methods differ in two ways, namely the choice of the basis of $K_n$,
and the choice of the linear combination on this basis. But, they all allow for an error estimate
of the following type
\begin{align*}
\|x-x_n\|\leq \|A^{-1}\Pi _n\| \min_{p(0)=1,\ p\in \PP_n}\|p(A)b\| \leq \delta_n(A)\|A^{-1}\Pi _n\|  \|b\|.
\end{align*}
Here, $I{-}\Pi _n$ is a projector on the Krylov subspace $A\,K_n$, the orthogonal projector in the GMRES case.

We want to make our bounds \eqref{est6bis} together with Theorem~\ref{thm} more explicit by choosing a particular non-convex lens-shaped set $E$, which allows us to express the occuring constants $\| F_n(A)\|$ and $\gamma$ in terms of angles related to $A$.

\begin{figure}[tb]
 \includegraphics[width=17cm,viewport=0 560 590 740,clip=true]{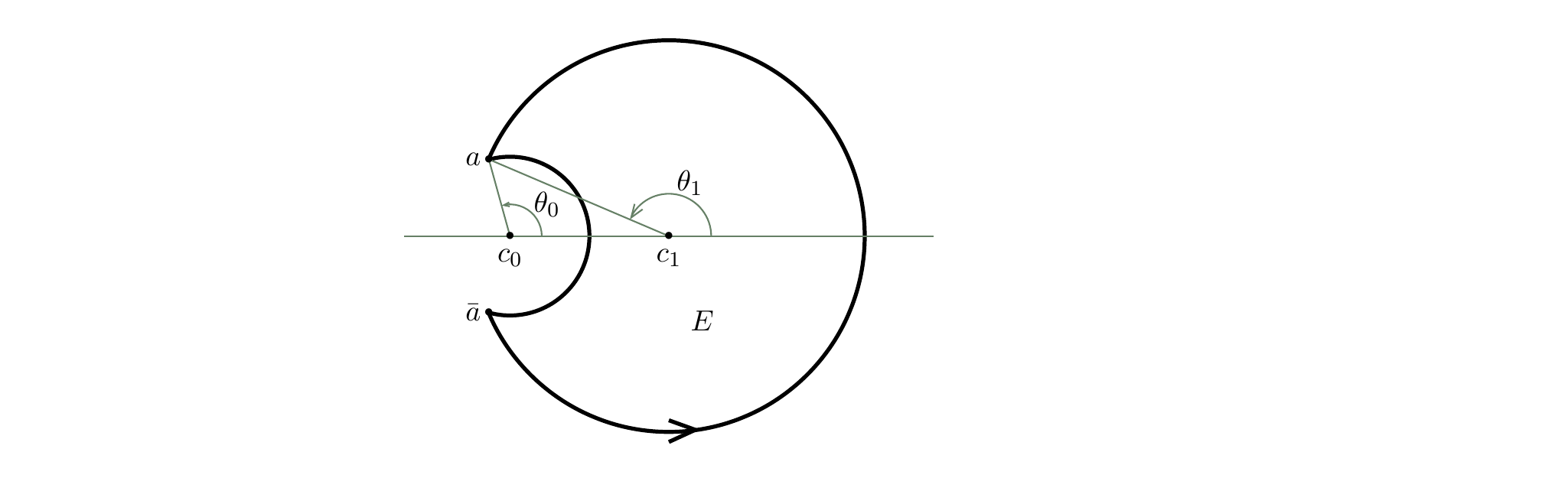}
\caption{Shape of the set $E$ of Corollary~\ref{cor} and angles $0<\theta_0<\theta_1<\pi$.}\label{figure3}
\end{figure}

\begin{corollary}\label{cor}
   Consider $E=\{z\in \C\,; |z{-}c_1|\leq r_1$ and $|z{-}c_0|\geq r_0\}$
   with $W(A)$ being contained in some ball of radius $r_1>0$ centered at $c_1\in \mathbb R$, and $W((A{-}c_0)^{-1})$ being contained in some ball of radius $1/r_0>0$ centered at $0$, where  $c_0\in \mathbb R$.
   We suppose that
   \begin{align*}
      c_0{-}r_0<c_1{-}r_1<c_0{+}r_0<c_1{+}r_1 \quad \mbox{and}
      \quad 0<c_0{+}r_0
   \end{align*}
   such that $E$ is (lens-shaped and) simply connected as in Fig.~\ref{figure3} and does not contain $0$.

   For the endpoint $a$ of the circular arcs composing $\partial E$, we introduce
   the angles $\theta_0=\arg(a{-}c_0)$, $\theta_1=\arg(a{-}c_1), \arg(a)\in (0,\pi)$, Then
   \begin{align*}
\| F_n(A)\| \leq 2{+}4\theta _0/\pi \quad\hbox{and} \quad \gamma =\frac{\sin(\frac{\pi \arg a}{2\pi -\theta _1+\theta_0})}{\sin(\frac{\pi (\pi +\theta _0-\arg a)}{2\pi -\theta _1+\theta_0})}.
   \end{align*}
\end{corollary}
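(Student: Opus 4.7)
The first inequality comes directly from Theorem~\ref{thm}(b), as extended in Remark~\ref{rem}(a), together with the computation $v(E) = 1 + 2\theta_0/\pi$. For $\gamma$, I construct the inverse $\Psi := 1/\Phi$ of the Riemann map (which sends the exterior of $E$ conformally onto $\mathbb D$ with $\Psi(\infty) = 0$, so $\gamma = |\Psi(0)|$, a quantity unaffected by a post-rotation of $\Psi$) as a composition of a M\"obius map, a rotated power, and a second M\"obius.

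\textbf{Proof of the first inequality.} Setting $E_1 = \{z : |z-c_1| \leq r_1\}$, which is convex and contains $W(A)$, we have $E = \{z \in E_1 : |z-c_0| \geq r_0\}$, exactly the setting of Remark~\ref{rem}(a), and the hypothesis on $W((A-c_0)^{-1})$ is the one required there. Hence $\|F_n(A)\| \leq 2 v(E)$. Following the concluding paragraph of the proof of Theorem~\ref{thm}, we have $\mu_-(s,z) = 0$ for $\sigma(s) \in \partial E \cap \partial E_1$, while $\mu_-(s,z) \leq 1/(2\pi r_0)$ on the arc $\{|z-c_0|=r_0\} \cap \partial E$, with equality when $\sigma(s)$ and $z$ both lie on this arc. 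Since the arc has length $r_0 \cdot 2\theta_0$, one obtains $\sup_z \int_{\partial E_z} \mu_-\,ds = \theta_0/\pi$, whence $v(E) = 1 + 2\theta_0/\pi$ and $\|F_n(A)\| \leq 2 + 4\theta_0/\pi$.

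\textbf{Construction of $\Psi$.} Write $\Psi = M \circ g \circ T$. The M\"obius map $T(z) = (z-a)/(z-\bar a)$ sends $a \mapsto 0$ and $\bar a \mapsto \infty$, and carries the two circles bounding $\partial E$ (each passing through $a$ and $\bar a$) to two straight lines through the origin; using $T'(a) = i/(2\,\Im a)$ (a rotation by $\pi/2$) together with the perpendicularity of the tangent to $|z-c_j|=r_j$ at $a$ to $(a-c_j) = r_j e^{i\theta_j}$, these lines have directions $\theta_0$ and $\theta_1$ at the origin. Testing one interior point of $E$ shows that $S := T(E)$ is the sector $\{re^{i\phi} : r > 0,\ \phi \in (\theta_0+\pi, \theta_1+\pi)\}$, so its exterior is the sector of opening $2\pi - (\theta_1-\theta_0)$. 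Next, $g(w) := e^{-i\alpha(\theta_1+\pi)} w^\alpha$ with $\alpha := \pi/(2\pi - \theta_1 + \theta_0)$, using the branch $\arg w \in (\theta_1+\pi, \theta_0+3\pi)$, maps the exterior of $S$ conformally onto the upper half-plane (sending the two boundary rays to the positive and negative real axes). Finally, $M(w) := (w-u_1)/(w-\bar u_1)$ with $u_1 := g(T(\infty)) = g(1)$ maps the upper half-plane onto $\mathbb D$ with $u_1 \mapsto 0$.

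\textbf{Evaluation and main obstacle.} Compute $T(\infty) = 1$ and $T(0) = a/\bar a = e^{2i\arg a}$, both of modulus $1$. Under the chosen branch (taking $\arg T(\infty) = 2\pi$ and $\arg T(0) = 2\arg a + 2\pi$, which lie in $(\theta_1+\pi, \theta_0+3\pi)$ under the hypotheses $0<\theta_0<\theta_1<\pi$, $\arg a \in (0,\pi)$, and $c_0 + r_0 > 0$), one obtains $u_1 = g(1) = e^{i\beta_1}$ and $g(T(0)) = e^{i\gamma_0}$ with
\[
\beta_1 = \frac{\pi(\pi - \theta_1)}{2\pi - \theta_1 + \theta_0}, \qquad \gamma_0 = \frac{\pi(2\arg a + \pi - \theta_1)}{2\pi - \theta_1 + \theta_0},
\]
both in $(0,\pi)$. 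The identity $|e^{i\phi_1} - e^{i\phi_2}| = 2|\sin((\phi_1-\phi_2)/2)|$ then gives
\[
\gamma = |M(g(T(0)))| = \frac{\sin((\gamma_0-\beta_1)/2)}{\sin((\gamma_0+\beta_1)/2)} = \frac{\sin\bigl(\pi \arg a/(2\pi-\theta_1+\theta_0)\bigr)}{\sin\bigl(\pi(\arg a + \pi - \theta_1)/(2\pi-\theta_1+\theta_0)\bigr)},
\]
and $\sin x = \sin(\pi-x)$ rewrites the denominator as $\sin(\pi(\pi+\theta_0-\arg a)/(2\pi - \theta_1 + \theta_0))$, which is the stated formula. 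The main obstacle is the branch bookkeeping: identifying the correct side for $T(E)$ (done by testing a single interior point) and then picking the branch of $w^\alpha$ so that both $T(\infty) = 1$ and $T(0) = e^{2i\arg a}$ land in the angular strip parametrizing the exterior of $S$. Once this is set correctly, everything reduces to the sine identity on the unit circle.
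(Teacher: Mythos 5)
Your proof is correct and follows essentially the same route as the paper: the bound on $\|F_n(A)\|$ via Theorem~\ref{thm}(b)/Remark~\ref{rem}(a) with $v(E)=1+2\theta_0/\pi$, and the computation of $\gamma$ by composing a M\"obius map sending $a\mapsto 0$, $\bar a\mapsto\infty$ (turning the exterior of the lens into a sector), a power map to a half-plane, and a final M\"obius map — the paper merely symmetrizes the sector with a pre-rotation and builds $\Phi$ onto the exterior of $\mathbb D$ rather than $1/\Phi$ onto $\mathbb D$. The only blemish is the sign of $T'(a)=1/(2i\,\Im a)=-i/(2\Im a)$, which is immaterial since you identify the correct sector by testing an interior point.
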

\begin{proof}
 The upper bound for $\| F_n(A)\|$ follows from Theorem~\ref{thm} and Remark~\ref{rem}(a) by recalling that $v(E)=1+2\theta_0/\pi$. For showing the claimed formula for $\gamma$, let us construct explicitly the corresponding map $\Phi$. We consider
 \begin{align*}
\varphi _1(z):=e^{-i\frac{\theta _1+\theta_0}{2}}\frac{z-a}{z-\bar a}\ \ \hbox{and}\ \
S=\{\rho \,e^{i\mu }\,; \rho >0,\ -\pi{+}
 \tfrac{\theta _1-\theta_0}{2}<\mu <\pi {-}\tfrac{\theta _1-\theta_0}{2}\},
\end{align*}
where we notice that $c_0<c_1$ by assumption, and thus $0<\theta_1-\theta_0<\pi$.
It is easily seen that $\varphi _1$ maps the exterior of $E$ onto the sector $S$. For $z$ exterior to $E$, we can define in the canonical way $\varphi _2(z):=\varphi _1(z)^{\pi /(2\pi-\theta _1+\theta _0)}$, so that
 $\varphi _2$ maps the exterior of $E$ onto the half-plane $\Re z>0$. Finally, we define
 \begin{align*}
 \Phi (z):=\frac{\overline{\varphi _2(\infty)}+\varphi_2(z)}{\varphi _2(\infty)-\varphi_2(z)}\,e^{i\mu },\quad\hbox{$\mu\in\mathbb R $ being chosen such that }\Phi '(\infty)>0,
 \end{align*}
 and note that $\Phi $ maps the exterior of $E$ onto the exterior of the unit disk and $\Phi (\infty)=\infty$. We have $\varphi _2(\infty)=e^{-i\alpha}$ and $\varphi _2(0)=e^{i\beta }$ with $\alpha=\frac{\pi }{2\pi -\theta _1+\theta _0}\frac{\theta _1+\theta_2}{2}$ and $\beta =2\frac{\pi }{2\pi -\theta _1+\theta _0}\arg a-\alpha$. Therefore
  \begin{align*}
 \gamma =\frac{1}{|\Phi (0)|}=\Big|\frac{e^{-i\alpha}-e^{i\beta }}{e^{i\alpha}+e^{i\beta }}\Big|
 =\Big|\frac{\sin\frac{\alpha+\beta}{2} }{\cos\frac{\alpha-\beta}{2} }\Big|=
 \frac{\sin(\frac{\pi \arg a}{2\pi -\theta _1+\theta_0})}{\sin(\frac{\pi (\pi +\theta _0-\arg a)}{2\pi -\theta _1+\theta_0})}.
 \end{align*}
\end{proof}

As an illustration of Corollary~\ref{cor}, consider the situation of \cite[Corollaire~3]{beck} where $c_1=0$, $r_1=\max\{ |z|\,; z \in W(A) \}$,
and $\Re(A) \geq \alpha=\cos(\beta) \, r_1>0$ for some $\beta\in (0,\pi/2)$.
Then for all $c_0<\alpha$ we find that
\begin{align*}
    \| (A-c_0)^{-1} \| \leq \frac{1}{\mbox{dist}(c_0,W(A))}\leq \frac{1}{\alpha-c_0}=:\frac{1}{r_0} ,
\end{align*}
and, for $c_0\to -\infty$, we see that $\theta_0\to 0$, and $\arg a=\theta_1\to \beta$. Hence the generalization \cite[Corollaire~3]{beck} of Elman's bound \cite{eisen,el}
with $\gamma
 ={\sin(\frac{\pi \beta}{2\pi -\beta})}
 /
 {\sin(\frac{\pi(\pi-\beta)}{2\pi -\beta})}=2\, {\sin(\frac{\pi \beta/2}{2\pi -\beta})}$ follows as a limiting case from Corollary~\ref{cor}.

\bigskip

{\bf Acknowledgements.}
The authors gratefully acknowledge valuable discussions with Catalin Badea.

\bigskip


\bibliographystyle{amsplain}

\bigskip

\noindent Laboratoire Paul Painlev\' e, UMR CNRS no. 8524,\\
Universit\'e de Lille 1, 59655 Villeneuve d'Ascq Cedex, France
\\
{\tt Bernhard.Beckermann@math.univ-lille1.fr}\\

\noindent Institut de Recherche Math\'ematique de Rennes, UMR 6625
au CNRS,\\ Universit\'e de Rennes 1, Campus de Beaulieu, 35042
RENNES Cedex, France\\ {\tt michel.crouzeix@univ-rennes1.fr}



\end{document}